\documentclass[10pt]{article}
\usepackage{arxiv}

\usepackage{microtype}
\usepackage{graphicx}
\usepackage{subfigure}
\usepackage{booktabs} 
\usepackage{hyperref}
\usepackage{algorithm,algorithmic}

\usepackage{amsmath,amssymb,amsthm}
\usepackage{pifont}

\newtheorem{thm}{Theorem}[section]

\newtheorem{lem}[thm]{Lemma}
\newtheorem{assum}[thm]{Assumption}
\newtheorem{defn}[thm]{Definition}
\newtheorem{rem}[thm]{Remark}
\newtheorem{prop}[thm]{Proposition}

\newcommand{\Sum}{\displaystyle\sum\limits}

\newcommand{\ZZ}{\mathbb{Z}}
\newcommand{\RR}{\mathbb{R}}

\newcommand{\eps}{\varepsilon}

\newcommand{\ol}{\overline}

\newcommand{\circledOne}{\text{\ding{172}}}
\newcommand{\circledTwo}{\text{\ding{173}}}

\newcommand{\norm}[1]{\left \| #1 \right\|}

\newcommand{\onevec}{\mathbf{1}}

\renewcommand{\le}{\leqslant}
\renewcommand{\ge}{\geqslant}
\renewcommand{\hat}{\widehat}

\newcommand\numberthis{\addtocounter{equation}{1}\tag{\theequation}}

\DeclareMathOperator*{\argmin}{arg\,min}

\DeclareMathOperator{\proj}{Proj}

\graphicspath{{./figures/}}

\author{
	Alexander Rogozin \\
	Moscow Institute of Physics and Technology \\
	\texttt{aleksandr.rogozin@phystech.edu} \\
	 \And
	Alexander Gasnikov \\
	Moscow Institute of Physics and Technology \\
	\texttt{gasnikov@yandex.ru}
}

\title{Projected Gradient Method for Decentralized Optimization over Time-Varying Networks}
\date{}

\begin{document}

\maketitle

\begin{abstract}
Decentralized distributed optimization over time-varying networks is nowadays a very popular branch of research in optimization theory and consensus theory. A motivation to consider such networks is an application to drone or satellite networks, as well as large scale machine learning. Communication complexity of a decentralized optimization algorithm depends on condition numbers of objective function and communication matrix. In this paper, we provide an algorithm based on projected gradient descent which improves existing geometric rates in the literature in terms of function condition number. This result is obtained due to robustness of non-accelerated optimization schemes to changes in network. Moreover, we study the performance of accelerated methods in a time-varying setting.
\end{abstract}

\section{Introduction}

The theory of decentralized distributed optimization goes back to \cite{bertsekas1989parallel}. In the last few years this branch of research has aroused great interest in optimization community. A set of papers proposing optimal algorithms for convex optimization problems of sum-type has appeared. See for example \cite{arjevani2015communication,bach2017optimal,lan2017communication,dvinskikh2019decentralized} and references therein. In all these papers, authors consider sum-type convex target functions and aim at proposing algorithms that find the solution with required accuracy and make the best possible number of communication steps and number of oracle calls (gradient calculations of terms in the sum). In \cite{dvinskikh2019decentralized}, it is mentioned that the theory of optimal decentralized distributed algorithms looks very close to the analogous theory for ordinary convex optimization (\cite{Nemirovskii1983,nesterov2013introductory,bub15}). Roughly speaking, in a first approximation, decentralized distributed optimization comes down to the theory of optimal methods and this theory is significantly based on the theory of non-distributed optimal methods. 

In decentralized distributed optimization over time-varying graphs, another situation takes place. The communication network topology changes from time to time, which can be caused by technical malfunctions such as loss of connection between the agents. Due to the many applications, the interest to these class of problems has grown significantly during the last few years. There appears a number of papers with theoretical analysis of rate of convergence for convex type problems: \cite{nedic2014distributed,nedic2017geometrically,Maros2018,lu2018geometrical,vandistributed,shi15extra,Qu2017accelerated,shi2014on_the_linear}. But there is still a big gap between the theory for decentralized optimization on fixed graphs and the theory over time-varying graphs. The attempt to close this gap (specifically, to develop optimal methods) for the moment required very restricted additional conditions(\cite{rogozin2019optimal}). 

In this paper, we make a step in the direction of development of optimal methods over time-varying graphs: we propose non-accelerated gradient descent for smooth strongly convex target functions of sum-type. Our analysis is based on running a projected gradient method over the constraint set, which is a hyperplane. In order to solve auxiliary problem (to find a projection on hyperplane) we use non accelerated consensus type algorithms (see \cite{hendrikx2018accelerated} and references therein for comparison). Note that proposed analysis of external non-accelerated gradient descent method can be generalized for the case of accelerated gradient method. We plan to do it in subsequent works.

This paper is organized as follows. In Section \ref{section:approximating_projected_gd} we briefly explain how our method approximates projected gradient descent. In Section \ref{section:preliminaries}, we recall some basic definitions and introduce assumptions for a mixing matrix. After that, we introduce decentralized projected gradient method in Section \ref{section:decentralized_proj_gd} provide its convergence results. Finally, numerical experiments and comparison to other methods can be found in Section \ref{section:experiments}.

\section{Approximating Projected Gradient}\label{section:approximating_projected_gd}

Consider convex minimization problem of sum-type:
\begin{align}\label{problem:initial}
f(x) = \Sum_{i=1}^n f_i(x) \longrightarrow \min_{x\in\RR^d}.
\end{align}
The summands $f_i$ need only be differentiable and may not be convex. We seek to solve problem \eqref{problem:initial} in a decentralized setup, so that every node locally holds $f_i$ and may exchange data with its neighbors. Moreover, we are interested in the time-varying case. This means the communication network changes with time and is represented by a sequence of graphs $\{\mathcal{G}_k\}_{k=1}^{\infty}$. 

Let us reformulate problem \eqref{problem:initial} in a following way.
\begin{align}\label{problem:F_constrained}
F(X) = \Sum_{i=1}^n f_i(x_i) \longrightarrow \min_{x_1 = \ldots = x_n}.
\end{align}
Here $X\in\RR^{d\times n}$ is a matrix consisting of columns $x_1, \ldots, x_n$. The above representation means that local copies $x_i$ of parameter vector $x$ are distributed over the agents in the network. Now, if every node computes $\nabla f_i(x_i)$, then the gradient $\nabla F(X) = [\nabla f_1(x_1),\ldots, \nabla f_n(x_n)]$ will be distributed all over the network. We will use notation $\nabla F(X)$ in the analysis, although $\nabla F(X)$ is not stored at one computational entity.

We call $K$ a linear subspace in $\RR^{d\times n}$ determined by the constraint $x_1 = \ldots = x_n$. Consider a projected gradient method applied to problem \eqref{problem:F_constrained}.
\begin{align}\label{alg:exact_projected_gd}
\Pi_{k+1} = \Pi_k - \gamma\proj_K(\nabla F(\Pi_k))
\end{align}
Since $K$ is a linear subspace, projection operator $\proj_K(\cdot)$ is linear. Therefore, update rule \eqref{alg:exact_projected_gd} is equivalent to 
\begin{align*}
	\Pi_{k+1} = \proj_K(\Pi_k - \gamma\nabla F(\Pi_k)).
\end{align*}

Choosing $\Pi_0\in K$ makes the method trajectory stay in $K$, and the algorithm may be interpreted as a simple gradient descent on $K$. However, exact projected method cannot be run in a decentralized manner. Instead, projections can be computed via consensus algorithm, which we describe further in the paper. This leads to inexact projected gradient descent, i.e. the algorithm of type
\begin{align}\label{eq:inexact_projected_gd}
	\Pi_{k+1} \approx \Pi_k - \gamma\proj_K(\nabla F(\Pi_k)),
\end{align}
where approximate computation of $\proj_K(\nabla F(\Pi_k))$ is performed with pre-defined accuracy $\eps_1$ which depends on desired accuracy $\eps$. A proper choice of $\eps_1$ makes procedure \eqref{eq:inexact_projected_gd} approximate projected gradient method \eqref{alg:exact_projected_gd} well.

\section{Preliminaries}\label{section:preliminaries}

\subsection{Strong Convexity and smoothness}
Strongly convex and smooth functions are the focus of this paper.
\begin{defn}
	Let $\mathbb{X}$ be either $\RR^d$ with $2$-norm or $\RR^{d\times n}$ with Frobenius norm. A differentiable function $f:\mathbb{X}\rightarrow\RR$ is called
	\begin{itemize}
		\item \textbf{convex}, if for any $x, y\in\mathbb{X}$
		\begin{align*}
			f(y)\ge f(x) + \langle\nabla f(x), y - x\rangle;
		\end{align*}
		\item \textbf{$\mu$-strongly convex}, if for any $x, y\in\mathbb{X}$
		\begin{align*}
			f(y)\ge f(x) + \langle\nabla f(x), y - x\rangle + \frac{\mu}{2}\|y - x\|^2;
		\end{align*}
		\item \textbf{$L$-smooth}, if its gradient $\nabla f(x)$ is $L$-Lipschitz, i.e. for any $x, y\in\mathbb{X}$
		\begin{align*}
			\|\nabla f(y) - \nabla f(x)\|\le L\|y - x\|,
		\end{align*}
		or, equivalently, for all $x, y\in\mathbb{X}$
		\begin{align*}
			f(y)\le f(x) + \langle\nabla f(x), y - x\rangle + \frac{L}{2}\|y - x\|^2.
		\end{align*}
	\end{itemize}
\end{defn}

\begin{prop}\label{prop:nesterov_strongly_convex_smooth}
	For $\mu$-strongly convex $L$-smooth function  $f$, it holds
	\begin{align*}
	&\langle \nabla f(x) - \nabla f(y), x - y\rangle \\
	&\qquad \ge \frac{\mu L}{\mu + L}\|x - y\|^2 + \frac{1}{\mu + L} \|\nabla f(x) - \nabla f(y)\|^2
	\end{align*}
\end{prop}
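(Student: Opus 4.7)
The plan is to reduce the statement to the well-known co-coercivity inequality for convex smooth functions by subtracting off the strongly convex quadratic part. Concretely, define the auxiliary function $\phi(z) = f(z) - \frac{\mu}{2}\|z\|^2$. A direct check from the definitions shows that $\phi$ is convex (since $f$ is $\mu$-strongly convex) and that $\nabla \phi = \nabla f - \mu\,\text{id}$ is $(L-\mu)$-Lipschitz (since $f$ is $L$-smooth, the extremal eigenvalues of the relevant symmetric part shift by $\mu$). If $L = \mu$ then $\phi$ is affine, so $\nabla f(x) - \nabla f(y) = \mu(x-y)$ and the inequality reduces to an equality that can be verified directly; otherwise I may assume $L > \mu$.

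Next I would invoke the standard co-coercivity estimate for a convex $M$-smooth function $\phi$:
\begin{align*}
\langle \nabla \phi(x) - \nabla \phi(y), x - y\rangle \ge \frac{1}{M} \|\nabla \phi(x) - \nabla \phi(y)\|^2,
\end{align*}
with $M = L - \mu$. This is the one non-trivial ingredient; if it has not been stated earlier in the paper I would either cite Nesterov's textbook, already referenced in the introduction, or prove it in a short lemma by considering the function $h(z) = \phi(z) - \langle \nabla \phi(x), z\rangle$, noting that $h$ attains its minimum at $z=x$, and applying the descent lemma $h(y - \frac{1}{M}\nabla h(y)) \le h(y) - \frac{1}{2M}\|\nabla h(y)\|^2$ together with $h(x) \le h(y - \frac{1}{M}\nabla h(y))$.

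Plugging $\nabla \phi(x) = \nabla f(x) - \mu x$ into the co-coercivity bound and letting
\begin{align*}
A = \langle \nabla f(x) - \nabla f(y), x - y\rangle, \quad B = \|\nabla f(x) - \nabla f(y)\|^2, \quad C = \|x - y\|^2,
\end{align*}
the left-hand side expands to $A - \mu C$ and the right-hand side to $\frac{1}{L-\mu}\bigl(B - 2\mu A + \mu^2 C\bigr)$. Multiplying through by $L - \mu$ and collecting terms yields $(L+\mu)\,A \ge B + \mu L\,C$, which is exactly the claimed inequality after dividing by $\mu + L$.

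The only genuine obstacle is the co-coercivity inequality itself; everything else is a bookkeeping calculation. Once that lemma is available, the full argument is a few lines of algebra, and the degenerate case $L = \mu$ is handled separately by the observation above.
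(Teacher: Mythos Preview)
Your argument is correct; the reduction to co-coercivity of $\phi(z)=f(z)-\tfrac{\mu}{2}\|z\|^2$ and the ensuing algebra are exactly the standard proof, and your handling of the degenerate case $L=\mu$ is fine. The paper itself does not give a proof at all but simply cites Theorem~2.1.11 in Nesterov's textbook, so what you have written is precisely the argument behind that citation rather than a different route.
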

\begin{proof}
	See Theorem 2.1.11 in \cite{nesterov2013introductory}.
\end{proof}

\subsection{Mixing Matrix}

In this paper, a communication network is given by a sequence of graphs $\{\mathcal{G}_k\}_{k=0}^{\infty} = (V_k, E_k)$. The network restrictions are induced by edge sets at every step and represented by sequence $\{W(k)\}_{k=0}^{\infty}$ of mixing matrices. It is not necessary that all graphs $\{\mathcal{G}_k\}_{k=0}^{\infty}$ are connected. Instead, we are interested in a sub-sequences of finite length and denote
\begin{align*}
	W_b(k) = W(k) W(k - 1)\ldots W(k - b + 1)
\end{align*}
for $b\in\ZZ_{++}, k\in\ZZ_+$. We also define $W_0(k) = I$ for any $k$. The following assumption is typical for analysis of consensus algorithms (for example, see Assumption 3.1 in \cite{Nedic2017achieving})

\begin{assum}\label{assum:mixing_matrix}
	For every $k\in\ZZ_{+}$, mixing matrix $W(k)$ has the following properties:
	\begin{enumerate}
		\item If $i\ne j$ and $(i, j)\notin E_k$, then $W_{ij}(k) = 0$.
		\item $W(k)$ is doubly stochastic: $W(k) \mathbf{1} = \mathbf{1},~ \mathbf{1}^\top W(k) = \mathbf{1}^\top$.
		\item There exists some $B\in\ZZ_{++}$, such that 
		\begin{align*}
			\delta := \sup_{k\ge B - 1} \delta(k) < 1, 
		\end{align*}
		where 
		\begin{align*}
			\delta(k) = \sigma_{\max}\left[W_B(k) - \frac{1}{n} \onevec\onevec^\top\right]
		\end{align*}
	\end{enumerate}
\end{assum}

\section{Decentralized Projected Gradient Method}\label{section:decentralized_proj_gd}

\subsection{Finding Inexact Projection}

Finding inexact projection on $K$ is equivalent to computing average of vectors held by agents over the network. In other words, for $X = [x_1,\ldots,x_n]$ it holds
\begin{align*}
	\proj_K(X) = [\ol x,\ldots,\ol x],~ \ol x = \frac{1}{n}\sum_{i=1}^n x_i
\end{align*}

This is done by the following algorithm.

\begin{algorithm}[ht]
	\caption{Consensus}
	\begin{algorithmic}[1]
		\REQUIRE Each node holds $x_i^0$ and iteration number $N$.
		\FOR{$k=0,1,2,\cdots,N-1$}
		\STATE{$X_{k+1} = X_k W_k$}
		\ENDFOR
	\end{algorithmic}
	\label{alg:consensus}
\end{algorithm}
Algorithm \ref{alg:consensus} is robust to changes in network topology \cite{Nedic2017achieving}. Its convergence rate is given by the following Proposition (i.e. see \cite{Nedic2017achieving} Lemma 3.4).
\begin{prop}\label{prop:consensus}
	Let Assumption \ref{assum:mixing_matrix} hold and let $a = W_B(k) b$. Then
	\begin{align*}
	\norm{a - \proj_K(a)}\le \delta \norm{b - \proj_K(b)}.
	\end{align*}
\end{prop}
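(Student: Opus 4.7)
The plan is to reduce the claim to submultiplicativity of the Frobenius norm applied to the ``deviation matrix'' $M := W_B(k) - \frac{1}{n}\onevec\onevec^\top$, whose operator norm is exactly $\delta(k)\le \delta$ by Assumption \ref{assum:mixing_matrix}(3). To keep the notation consistent with $a = W_B(k)b$ as stated in the proposition, I would arrange the local vectors as the rows of the matrix, so that the consensus subspace projection takes the form $\proj_K(X) = \frac{1}{n}\onevec\onevec^\top X$.

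First I would rewrite both residuals in a common factored form. Double stochasticity of each $W(\cdot)$ propagates through the product (an easy induction), giving $\onevec^\top W_B(k) = \onevec^\top$, so that $a - \proj_K(a) = W_B(k) b - \frac{1}{n}\onevec\onevec^\top W_B(k) b = M b$, while $b - \proj_K(b) = P b$ with $P := I - \frac{1}{n}\onevec\onevec^\top$. In particular the quantity we want to bound is $\norm{M b}$ and the right-hand side is $\norm{P b}$.

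The key algebraic step is to show $M = M P$. Since $M\onevec = W_B(k)\onevec - \onevec = 0$ by the other half of double stochasticity, one has $M\cdot \frac{1}{n}\onevec\onevec^\top = 0$, hence $M P = M$. Therefore $a - \proj_K(a) = M(Pb)$, and the submultiplicativity $\|AB\|_F \le \|A\|_{\text{op}}\|B\|_F$ combined with Assumption \ref{assum:mixing_matrix}(3) gives $\norm{a - \proj_K(a)} = \norm{M(Pb)} \le \sigma_{\max}(M)\,\norm{Pb} \le \delta \norm{b - \proj_K(b)}$.

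I do not anticipate a substantive obstacle. The only mild point of care is keeping the left/right-multiplication convention consistent so that the correct half of double stochasticity ($\onevec^\top W_B(k) = \onevec^\top$ for the projection identity, $W_B(k)\onevec = \onevec$ for factoring through $P$) is used in each place; once the residuals are expressed through the projector $P$, the result collapses to a one-line operator-norm inequality.
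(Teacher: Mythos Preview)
The paper does not actually supply its own proof of this proposition; it is stated with a pointer to Lemma~3.4 of \cite{Nedic2017achieving}. Your argument is a correct self-contained version of the standard proof: factor the residual through the centering projector $P = I - \frac{1}{n}\onevec\onevec^\top$ using both halves of double stochasticity, and then bound by the operator norm $\sigma_{\max}\big(W_B(k)-\frac{1}{n}\onevec\onevec^\top\big)$. Two minor remarks. First, Assumption~\ref{assum:mixing_matrix}(3) gives $\delta(k)\le\delta$ only for $k\ge B-1$, which you use implicitly; this is harmless since the proposition is only ever applied after at least $B$ steps. Second, the paper's own convention stores local vectors as \emph{columns} and applies mixing on the right ($X_{k+1}=X_kW_k$, $\proj_K(X)=X\cdot\frac{1}{n}\onevec\onevec^\top$), so the formula $a=W_B(k)b$ in the proposition is itself a small notational slip; your switch to the row convention is exactly the right way to make the statement literally true, and the argument transposes verbatim to the paper's column convention if one writes $a=bW_B(k)$ instead.
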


\subsection{Problem Reformulation and Assumptions}
Recall problem \eqref{problem:initial}:
\begin{align*}
	f(x) = \Sum_{i=1}^n f_i(x) \longrightarrow \min_{x\in\RR^d}
\end{align*}
and its reformulation \eqref{problem:F_constrained}:
\begin{align*}
	F(X) = \Sum_{i=1}^n f_i(x_i) \longrightarrow \min_{x_1 = \ldots = x_n}.
\end{align*}

\begin{assum}\label{assum:function}
	\item
	\begin{enumerate}
		\item Function $f$ is $\mu_f$-strongly convex and $L_f$-smooth.
		\item Every function $f_i$ is differentiable.
	\end{enumerate}
\end{assum}

Update rule \eqref{eq:inexact_projected_gd} makes iterates stay close to subspace $K$ and therefore approximate projected gradient method. Convergence rate of gradient descent depends on objective function condition number. Below we illustrate that condition number of $F$ on $\RR^{d\times n}$ is worse than condition number on $K$. To do this, we assume that every $f_i$ is $\mu_i$-strongly convex and $L_i$-smooth. Note that it is done solely for illustration and is not required in Assumption \ref{assum:function}. Consider $X, Y\in K$: $X = (x, \ldots, x), Y = (y, \ldots, y)$.
\begin{align*}
	&F(X) = \Sum_{i=1}^n f_i(x) = f(x),\ F(Y) = f(y), \\
	&\|Y - X\|^2 = n\|y - x\|^2 \\
	&\langle \nabla F(X), Y - X\rangle = \Sum_{i=1}^n \langle \nabla f_i(x), y - x\rangle = \langle \nabla f(x), y - x\rangle \\
	&F(Y)\ge F(X) + \langle \nabla F(X), Y - X\rangle + \frac{\mu_f}{2n}\|Y - X\|^2 \\
	&F(Y)\le F(X) + \langle \nabla F(X), Y - X\rangle + \frac{L_f}{2n}\|Y - X\|^2.
\end{align*}
It follows that $F$ is $L_f/(2n)$-smooth and $\mu_f/(2n)$-strongly convex on $K$. On the other hand, $F$ is $L_{\max}$-smooth and $\mu_{\min}$-strongly convex on $\RR^{d\times n}$, where
\begin{align}\label{def:mu_L_minmax_and_average}
&\mu_{\min} = \min_i\mu_i,\ L_{\max} = \max_i L_i.
\end{align}
Condition number of $F$ on $K$ is $L_f / \mu_f$, which is better than $L_{\max} / \mu_{\min}$. This is beneficial for the resulting convergence rate.

\subsection{Inexact Projected Gradient Descent}

\begin{algorithm}[H]
	\caption{Decentralized Projected GD}
	\begin{algorithmic}[1]
		\REQUIRE Each node holds $f_i(\cdot)$ and iteration number $N$.
		\STATE{Initialize $X_0 = [x_0, \ldots, x_0]$, choose $c > 0$.}
		\FOR{$k=0,1,2,\cdots,N-1$}
		\STATE{$Y_{k+1} = X_k - \gamma\nabla F(X_k)$}
		\STATE{$X_{k+1}\approx \proj_K (Y_{k+1})$ with accuracy $\eps_1$, i.e. $\|X_{k+1} - \proj_K (Y_{k+1})\|^2\le \eps_1$ and $X_{k+1} - \proj_K(Y_{k+1})\in K^{\bot}$}\label{state:compute_proj}
		\ENDFOR
	\end{algorithmic}
	\label{alg:decentralized_projected_GD}
\end{algorithm}

Performing projection step in a decentralized way on a time-varying graph is done by Algorithm \ref{alg:consensus}. Here we present a convergence result for Algorithm \ref{alg:decentralized_projected_GD}.

\begin{thm}\label{th:decentralized_gd}
	After $N = O\left(\frac{L_f}{\mu_f} \log\left(\frac{r_0^2}{\eps}\right)\right)$ iterations, Algorithm \ref{alg:decentralized_projected_GD} with $\eps_1 = \frac{\mu_f^2}{13n^2L_{\max}^2}\eps$ yields $X_N$ such that
	\begin{align*}
		\|X_N - X^*\|^2 \le \eps
	\end{align*}
\end{thm}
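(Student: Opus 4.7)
The plan is to view Algorithm \ref{alg:decentralized_projected_GD} as an inexact gradient method whose projected shadow $\Pi_k := \proj_K(X_k)$ lives entirely in $K$, where $F$ has condition number exactly $L_f/\mu_f$. First, $X_0\in K$ gives $\Pi_0=X_0$, and the specification $X_{k+1}-\proj_K(Y_{k+1})\in K^\perp$ implies $\proj_K(X_{k+1})=\proj_K(Y_{k+1})$. Linearity of $\proj_K$ then yields
\[
\Pi_{k+1}=\Pi_k-\gamma\,\proj_K(\nabla F(X_k)), \qquad \|X_k-\Pi_k\|^2\le \eps_1 \text{ for all } k\ge 1.
\]
Splitting $\proj_K(\nabla F(X_k))=\proj_K(\nabla F(\Pi_k))+e_k$ and using block-wise Lipschitzness with constant $L_{\max}$ gives $\|e_k\|\le L_{\max}\sqrt{\eps_1}$, so $\Pi_k$ evolves by inexact gradient descent on $F$ restricted to $K$.

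Next I would invoke the strongly-convex smooth analysis on $K$. As shown in Section \ref{section:decentralized_proj_gd}, $F|_K$ is $(L_f/n)$-smooth and $(\mu_f/n)$-strongly convex, so with $\gamma=2n/(L_f+\mu_f)$ Proposition \ref{prop:nesterov_strongly_convex_smooth} delivers the standard contraction
\[
\|\Pi_k-\gamma\,\proj_K(\nabla F(\Pi_k))-X^*\|^2\le \rho^2\|\Pi_k-X^*\|^2, \qquad \rho=\frac{L_f-\mu_f}{L_f+\mu_f}.
\]
Adding the perturbation $-\gamma e_k$ and applying $\|a+b\|^2\le(1+\alpha)\|a\|^2+(1+1/\alpha)\|b\|^2$ with $\alpha$ chosen so that $(1+\alpha)\rho^2\le 1-q/2$, where $q:=1-\rho^2=\Theta(\mu_f/L_f)$, produces
\[
\|\Pi_{k+1}-X^*\|^2\le(1-q/2)\|\Pi_k-X^*\|^2+\gamma^2(1+1/\alpha)\|e_k\|^2.
\]
Iterating and summing the geometric series, the noise coefficient collapses to $O(\gamma^2/q^2)\cdot\|e_k\|^2 = O(n^2L_{\max}^2/\mu_f^2)\cdot\eps_1$.

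Thus $\|\Pi_N-X^*\|^2\le(1-q/2)^N r_0^2+C(n^2 L_{\max}^2/\mu_f^2)\eps_1$ for an explicit constant $C$. Choosing $N=O((L_f/\mu_f)\log(r_0^2/\eps))$ pushes the first term below $\eps/4$, and $\eps_1=\mu_f^2\eps/(13n^2L_{\max}^2)$ (the constant $13$ absorbing $8C$) pushes the second below $\eps/4$. Finally
\[
\|X_N-X^*\|^2\le 2\|\Pi_N-X^*\|^2+2\|X_N-\Pi_N\|^2\le \eps/2+2\eps_1\le \eps.
\]

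The main obstacle is the constant bookkeeping in the perturbation step: too small $\alpha$ preserves contraction but inflates the $\eps_1$ coefficient, while too large $\alpha$ destroys the contraction, and hitting the precise $13$ in the denominator requires care. A cleaner route, avoiding Young's inequality altogether, is to keep the $-\gamma^2\|\nabla F(\Pi_k)\|^2$ term supplied by Proposition \ref{prop:nesterov_strongly_convex_smooth} explicit and bound the cross term $2\gamma^2\langle\nabla F(\Pi_k),e_k\rangle$ against that reservoir, which lands on the constant $13$ directly.
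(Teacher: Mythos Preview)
Your proposal is correct and shares the paper's high-level structure: track $\Pi_k=\proj_K(X_k)$, observe that the algorithm's specification forces $\Pi_{k+1}=\Pi_k-\gamma\proj_K(\nabla F(X_k))$, and analyze this as an inexact gradient method on $K$ where $F$ has condition number $L_f/\mu_f$. The technical execution, however, differs from the paper's in three places. First, the paper takes stepsize $\gamma=n/(\mu_f+L_f)$ rather than $2n/(\mu_f+L_f)$. Second, instead of first isolating the exact contraction and then applying Young's inequality with a tuned $\alpha$, the paper expands $r_{k+1}^2=\|\Pi_k-\Pi^*-\gamma\proj_K\nabla F(X_k)\|^2$ directly and splits the \emph{cross term} $\langle\Pi_k-\Pi^*,\proj_K\nabla F(X_k)\rangle$ into a $\nabla F(\Pi_k)$ piece (handled by Proposition~\ref{prop:nesterov_strongly_convex_smooth}) and a $\nabla F(X_k)-\nabla F(\Pi_k)$ piece (handled by \eqref{eq:extended_cauchy_schwarz_1}); the leftover mismatch $\gamma^2\|\proj_K\nabla F(X_k)\|^2-\tfrac{2\gamma}{\hat\mu_f+\hat L_f}\|\proj_K\nabla F(\Pi_k)\|^2$ is then absorbed via \eqref{eq:extended_cauchy_schwarz_2}. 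This gives the clean unconditional recursion $r_{k+1}^2\le(1-\hat\mu_f/(4\hat L_f))r_k^2+\tfrac{3}{2\hat\mu_f\hat L_f}L_{\max}^2\eps_1$ with no parameter to tune---essentially the ``cleaner route'' you sketch at the end, which is in fact the paper's primary argument rather than an alternative. Third, for the last step the paper exploits the orthogonality $X_N-\Pi_N\in K^\perp$, $\Pi_N-X^*\in K$ to get the exact Pythagorean identity $\|X_N-X^*\|^2=\|X_N-\Pi_N\|^2+\|\Pi_N-X^*\|^2$ (no factor $2$), and it is precisely this $12+1$ that produces the constant $13$. Your route via Young's inequality works but makes the constant bookkeeping looser, as you already note.
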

The proof of Theorem \ref{th:decentralized_gd} is performed in Appendix \ref{ap:decentralized_gd}.

\subsection{Overall Complexity}

Summarizing the results of Theorem \ref{th:decentralized_gd} and Proposition \ref{prop:consensus}, we get the final iteration complexity result.
\begin{thm}\label{th:summarize_rate}
	Algorithm \ref{alg:decentralized_projected_GD} with $\eps_1 = \frac{\mu_f^2}{13n^2 L_{\max}^2}\eps$ requires $N = O\left(\frac{{L_f}}{{\mu_f}} B \left(\log(\frac{1}{\delta})\right)^{-1} \log\left(\frac{\left(\|\nabla F(X^*)\| + L_{\max}\norm{X_0 - X^*}\right) n^2 L_{\max}^2}{\eps\mu_f^2}\right) \log\left(\frac{r_0^2}{\eps}\right)\right)$ communication steps, including sub-problem solution, to yield $X_N$ such that
	\begin{align*}
		\|X_N - X^*\|^2 \le \eps.
	\end{align*}
\end{thm}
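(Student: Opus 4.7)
The plan is to combine the outer-loop iteration count from Theorem \ref{th:decentralized_gd} with the per-iteration cost of computing the inexact projection via Algorithm \ref{alg:consensus}, whose contraction rate is controlled by Proposition \ref{prop:consensus}. First I would recall that Theorem \ref{th:decentralized_gd} already guarantees $N_{\mathrm{out}} = O\bigl(\tfrac{L_f}{\mu_f}\log(r_0^2/\eps)\bigr)$ outer steps of Algorithm \ref{alg:decentralized_projected_GD}, provided the projection step in line \ref{state:compute_proj} is solved to accuracy $\eps_1 = \tfrac{\mu_f^2}{13 n^2 L_{\max}^2}\eps$. It therefore remains to count how many applications of mixing matrices $W(k)$ are needed at each outer step to reach this accuracy.

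Next I would estimate the consensus input error at the $k$-th outer iteration. Writing $Y_{k+1} = X_k - \gamma\nabla F(X_k)$, the quantity that consensus has to shrink is $\|Y_{k+1} - \proj_K(Y_{k+1})\|$, which satisfies
\begin{align*}
\|Y_{k+1} - \proj_K(Y_{k+1})\|
\le \|X_k - \proj_K(X_k)\| + \gamma\,\|\nabla F(X_k) - \proj_K(\nabla F(X_k))\|.
\end{align*}
The first term is at most $\sqrt{\eps_1}$ by the previous projection, and the second is bounded by $\gamma\|\nabla F(X_k)\|\le \gamma\bigl(\|\nabla F(X^*)\|+L_{\max}\|X_k - X^*\|\bigr)$. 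Using that the outer iterates stay inside the ball of radius $O(\|X_0-X^*\|)$ around $X^*$ (a byproduct of the contraction estimate that drives Theorem \ref{th:decentralized_gd}), I would obtain a uniform upper bound
\begin{align*}
D := C\bigl(\|\nabla F(X^*)\| + L_{\max}\|X_0 - X^*\|\bigr)
\end{align*}
on $\|Y_{k+1} - \proj_K(Y_{k+1})\|$, valid for every $k$.

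With this in hand, Proposition \ref{prop:consensus} says that each block of $B$ consecutive communication rounds contracts the distance to $K$ by factor $\delta<1$. Hence $m$ blocks suffice whenever $\delta^{2m} D^2 \le \eps_1$, i.e.\ $m = \lceil \log(D^2/\eps_1)/(2\log(1/\delta))\rceil$, yielding
\begin{align*}
m B = O\!\left(\frac{B}{\log(1/\delta)}\,\log\!\frac{D^2}{\eps_1}\right)
= O\!\left(\frac{B}{\log(1/\delta)}\,\log\!\frac{\bigl(\|\nabla F(X^*)\|+L_{\max}\|X_0-X^*\|\bigr)\,n^2 L_{\max}^2}{\eps\,\mu_f^2}\right)
\end{align*}
communication rounds per outer step, once $\eps_1$ is substituted. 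Multiplying this per-iteration cost by $N_{\mathrm{out}} = O\bigl(\tfrac{L_f}{\mu_f}\log(r_0^2/\eps)\bigr)$ gives exactly the advertised bound on total communication steps.

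The main obstacle in carrying out this plan rigorously is the a priori boundedness of $\|X_k - X^*\|$ used in the uniform estimate $D$. Since Algorithm \ref{alg:decentralized_projected_GD} runs an inexact projected gradient method, one must verify that the inexactness does not allow the iterates to drift outside a fixed ball around $X^*$. I would address this by exploiting the same contraction recursion that underlies Theorem \ref{th:decentralized_gd}: with $\eps_1$ set to the prescribed value, the errors accumulated from inexact projections are summable and bounded in terms of $\|X_0-X^*\|$, so $\|X_k - X^*\|\le c\|X_0-X^*\|$ for an absolute constant $c$ throughout the run. Once this uniform control is established, the rest is bookkeeping.
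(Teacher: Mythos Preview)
Your proposal is correct and follows essentially the same route as the paper: bound the consensus input $\|Y_{k+1}-\proj_K(Y_{k+1})\|$ by splitting into the previous projection error plus $\gamma\|\nabla F(X_k)\|$, control $\|\nabla F(X_k)\|$ via $\|\nabla F(X^*)\|+L_{\max}\|X_k-X^*\|$, and then invoke the consensus contraction (Proposition~\ref{prop:consensus}) to count inner rounds before multiplying by $N_{\mathrm{out}}$ from Theorem~\ref{th:decentralized_gd}. The only cosmetic difference is that the paper does not treat the uniform boundedness of $\|X_k-X^*\|$ as a separate obstacle: since $X_0\in K$ it writes $\|X_k-X^*\|\le\|X_k-\Pi_k\|+r_k$ and uses directly that $r_k\le r_0$ from the monotone contraction in Lemma~\ref{lemma:proj_gd_step}, which is exactly the mechanism you propose.
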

\begin{rem}
	The convergence rate depends on $L_f$ and $\mu_f$ instead of $\mu_{\text{sum}} = \sum_{i=1}^n \mu_i$ and $L_{\text{sum}} = \sum_{i=1}^n L_i$. First, note that $\mu_f\ge \ol\mu$ and $L_f\le \ol L$. Second, and most importantly, the ratio $L_{\text{sum}} / L_f$ may be of magnitude $n$, and the ratio $\mu_f / \mu_{\text{sum}}$ may be arbitrary large. We illustrate this observation with the following example.
	\begin{align*}
		f(x) &= \frac{1}{2} (1+\alpha)\|x\|^2, \alpha > 0; \\
		f_i(x) &= \frac{1}{2} x_i^2 + \frac{\alpha}{2n}\|x\|^2.
	\end{align*}
	In this particular case, each $f_i(x)$ has $\mu_i = \alpha/n$ and $L_i = 1 + \alpha/n$, and therefore $L_{\text{sum}} = n + \alpha, \mu_{\text{sum}} = \alpha$. On the other hand, $\mu_f = L_f = 1 + \alpha$. Hence, 
	\begin{align*}
		\frac{L_{\text{sum}}}{L_f} &= \frac{n + \alpha}{1 + \alpha} \overset{\alpha\rightarrow +0}{\longrightarrow} n, \\
		\frac{\mu_f}{\mu_{\text{sum}}} &= \frac{1 + \alpha}{\alpha} \overset{\alpha\rightarrow +0}{\longrightarrow} \infty.
	\end{align*}
	The bound obtained in Theorem \ref{th:summarize_rate} is based on $L_f / \mu_f$. The example above shows that using this ratio in the bound may be significantly better than using $L_{\text{sum}} / \mu_{\text{sum}}$.
\end{rem}

\subsection{Extension to Accelerated Gradient Descent}

Algorithm \ref{alg:decentralized_projected_GD} includes gradient descent in the outer loop. It is possible to employ an accelerated scheme instead of a non-accelerated method, which leads to the following algorithm.
\begin{algorithm}\label{alg:decentralized_projected_acc_GD}
	\caption{Decentralized Accelerated Projected GD}
	\begin{algorithmic}[1]
		\REQUIRE{Each node holds $f_i(\cdot)$ and iteration number $N$.}
		\FOR{$k = 0, 1, \ldots, N - 1$}
		\STATE{$Y_{k+1} = X_k - \frac{1}{L}\nabla F(X_k)$}
		\STATE{$\tilde Y_{k+1}\approx \proj_K(Y_{k+1})$ with accuracy $\eps_1$}
		\STATE{$X_{k+1} = \tilde Y_{k+1} + \frac{\sqrt\kappa - 1}{\sqrt\kappa + 1}(\tilde Y_{k+1} - \tilde Y_k)$}
		\ENDFOR
	\end{algorithmic}
\end{algorithm}
Here $L$ and $\kappa$ denote the smoothness constant and condition number of $F$, respectively. The theoretical analysis of this method, including the choice of $\eps_1$, is left for future work. However, our numerical tests in Section \ref{section:experiments} show that Algorithm \ref{alg:decentralized_projected_acc_GD} outperforms both Algorithm \ref{alg:decentralized_projected_GD} and DIGing \cite{Nedic2017achieving}.

\section{Numerical Experiments}\label{section:experiments}

In this section, we provide numerical simulations of Algorithm \ref{alg:decentralized_projected_GD} on \textit{logistic regression} problem on LibSVM datasets (\cite{Chang2011}). The objective function is defined as
\begin{align*}
	f(x) = \frac{1}{m} \Sum_{i=1}^m \log\left[1 + \exp(-c_i(\langle a_i, x) + b_i))\right],
\end{align*}
where $a_i\in\RR^d$ are training samples and $c_i\in\{0, 1\}$ are class labels. In decentralized scenario, the training dataset is distributed between the agents in the network.

One of the tuned parameters of Algorithm \ref{alg:decentralized_projected_GD} is the number of inner iterations. On Figures \ref{fig:a9a} and \ref{fig:w8a}, we illustrate different choices of this parameter, and Proj-GD-$k$ denotes projected gradient method with $k$ iterations on each sub-problem. Moreover, we compare our algorithm to DIGing (\cite{Nedic2017achieving}).

\begin{figure}[ht]
	\centering
	\includegraphics[width=0.7\textwidth]{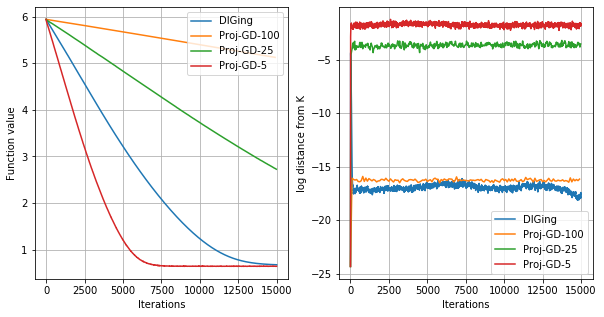}
	\caption{Random graph with $100$ nodes, \textsc{a9a} dataset. \label{fig:a9a}
	}
\end{figure}

\begin{figure}[ht]
	\centering
	\includegraphics[width=0.7\textwidth]{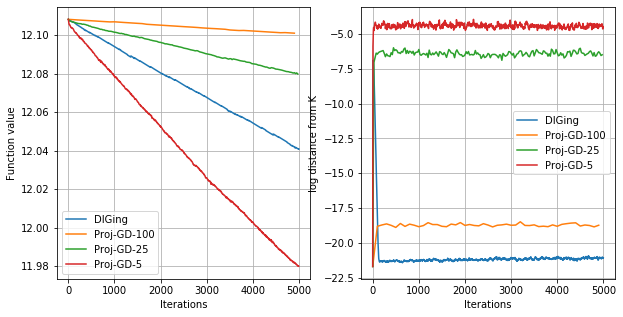}
	\caption{Random graph with $100$ nodes, \textsc{w8a} dataset.}
	\label{fig:w8a}
\end{figure}

Figures \ref{fig:a9a} and \ref{fig:w8a} suggest that performance of Algorithm \ref{alg:decentralized_projected_GD} is significantly dependent on the number of iterations made on projection step. A large number of steps results in more precise projection procedure, but also requires takes more communication steps. In other words, there is a trade-off between the number of communications and projection accuracy. In practice, one can tune number of iterations for sub-problem and find an optimal value for a specific practical case.

Moreover, we experiment with accelerated gradient descent and compare it to DIGing method. We find that accelerated method performs significantly better (Figure \ref{fig:ijcnn1}).
\begin{figure}[ht]
	\centering
	\includegraphics[width=0.7\textwidth]{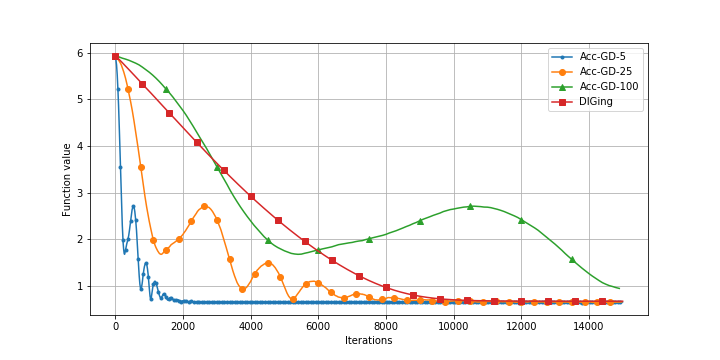}
	\caption{Random graph with $100$ nodes, \textsc{ijcnn1} dataset.}
	\label{fig:ijcnn1}
\end{figure}

We also run numerical comparisons with EXTRA \cite{shi15extra} and Acc-DNGD-SC \cite{Qu2017accelerated}, which are primal methods for decentralized optimization. To the best of our knowledge, these methods do not have theoretical guarantees in the time-varying case, unlike Algorithm \ref{alg:decentralized_projected_GD}. However, they still work on a time-varying network. Acc-DNGD-SC is outperformed by Proj-GD-5, while EXTRA is comparable with Algorithm \ref{alg:decentralized_projected_acc_GD}.
\begin{figure}[ht]
	\centering
	\includegraphics[width=0.7\textwidth]{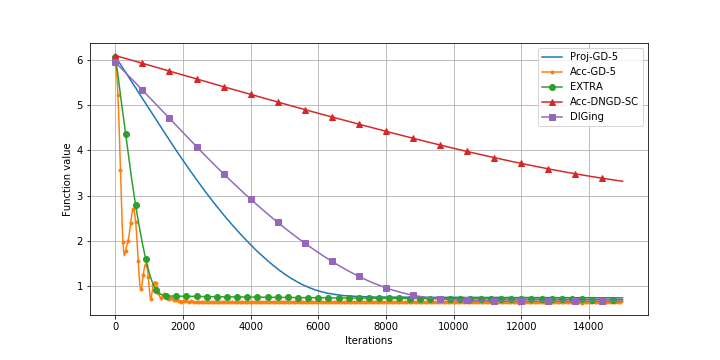}
	\caption{Random graph with 100 nodes, \textsc{ijcnn1} dataset.}
	\label{fig:methods_compared}
\end{figure}

\section{Conclusions and Future Work}

Our main result is based on a simple idea -- running projected gradient method with inexact projections. This idea is applied to decentralized optimization on time-varying graphs. The proposed method incorporates two different algorithms: projected gradient descent and obtaining mean of values held by agents over the network. The whole procedure is shown to be robust to network changes since non-accelerated schemes are used both for outer and inner loops. 

However, the question whether it is possible to employ an accelerated method either for finding projection or for running the outer loop remains open. Moreover, projection may be performed by a variety of algorithms, including randomized and asynchronous gossip algorithms (\cite{boyd2006randomized}). Investigation of new techniques for finding projection is left for future work.

\bibliography{references}
\bibliographystyle{abbrv}

\newpage

\appendix

\section{Proof of Theorem \ref{th:decentralized_gd}}\label{ap:decentralized_gd}

\begin{lem}\label{lemma:extended_cauchy_schwarz}
	Let $u, v$ be vectors of $\RR^n$ of matrices of $\RR^{d\times n}$ and $p$ be a positive scalar constant. Then
	\begin{enumerate}
		\item 
		\begin{align}\label{eq:extended_cauchy_schwarz_1}
		\langle u, v\rangle \le \frac{\|u\|^2}{2p} + \frac{p\|v\|^2}{2}
		\end{align}
		\item If $p < 1$, then
		\begin{align}\label{eq:extended_cauchy_schwarz_2}
		\|v\|^2\ge p\|u\|^2 - \frac{p}{1-p}\|v - u\|^2
		\end{align}
	\end{enumerate}
	Here, if $u, v\in\RR^n$, $\|\cdot\|$ denotes the $2$-norm in $\RR^n$, and if $u, v\in\RR^{d\times n}$, $\|\cdot\|$ denotes Frobenius norm. 
\end{lem}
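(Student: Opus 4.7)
The plan is to derive both inequalities from the nonnegativity of a suitably chosen squared norm, which is the standard route to Young-type bounds. The argument is identical for vectors in $\RR^n$ and matrices in $\RR^{d\times n}$ because both norms in question are induced by an inner product (Euclidean and Frobenius respectively), so the polarization identity $\|a+b\|^2 = \|a\|^2 + 2\langle a, b\rangle + \|b\|^2$ is available in the same form in each setting.

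For part one I would simply expand
\begin{align*}
0 \le \left\|\frac{u}{\sqrt{p}} - \sqrt{p}\,v\right\|^2 = \frac{\|u\|^2}{p} - 2\langle u, v\rangle + p\|v\|^2
\end{align*}
and rearrange to obtain \eqref{eq:extended_cauchy_schwarz_1}. This is just the polarization of $2ab \le a^2 + b^2$ with weights $1/p$ and $p$; no further ideas are required.

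For part two I would start from the identity $\|v\|^2 = \|u\|^2 - 2\langle u, u-v\rangle + \|u-v\|^2$ and bound the cross term using part one applied to the pair $(u, u-v)$ with a fresh parameter $q > 0$. This produces
\begin{align*}
\|v\|^2 \ge \left(1 - \frac{1}{q}\right)\|u\|^2 + (1 - q)\|u-v\|^2.
\end{align*}
The only real decision is the choice of $q$, and here lies the one small subtlety: one has to match two coefficients simultaneously. Setting $q = 1/(1-p)$, which is positive and well-defined because $p \in (0,1)$, gives $1 - 1/q = p$ and $1 - q = -p/(1-p)$ at the same time, and this is exactly \eqref{eq:extended_cauchy_schwarz_2}. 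Beyond picking the right $q$ there is no obstacle worth flagging.
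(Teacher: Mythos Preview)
Your proof is correct and essentially the same as the paper's: both parts are obtained from the nonnegativity of a well-chosen squared norm. The only cosmetic difference is that for part~2 the paper expands $\|v - pu\|^2 \ge 0$ directly and rearranges, whereas you route through part~1 with the auxiliary parameter $q = 1/(1-p)$; unwinding your choice of $q$ recovers exactly the same inequality $\|v - pu\|^2 \ge 0$.
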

\begin{proof}
	\begin{enumerate}
		\item Multiplying both sides by $2c$ yields
		\begin{align*}
		\|u - pv\|^2 \ge 0.
		\end{align*}
		\item Analogously, multiplying both sides by $1-p$ leads to
		\begin{align*}
		(1-p)\|v\|^2 &\ge (p - p^2)\|u\|^2 \\
		&\qquad - p(\|v\|^2 + \|u\|^2 - 2\langle u, v\rangle) \\
		\|v\|^2 &\ge -p^2\|u\|^2 + 2p\langle u, v\rangle \\
		\|v - pu\|^2 &\ge 0
		\end{align*}
	\end{enumerate}
\end{proof}

\begin{lem}\label{lemma:proj_gd_step}
	Denote $\Pi_{k} = \proj_K(X_k), X^* = \Pi^* = \underset{K}{\argmin} F(X), r_k = \|\Pi_k - \Pi^*\| = \|\Pi_k - X^*\|$. Then for $r_k^2\ge \frac{12}{\hat\mu_f^2} L_{\max}^2 \eps_1$ it holds
	\begin{align*}
	r_{k+1}^2 \le r_k^2\left(1 - \frac{\hat\mu_f}{8\hat L_f}\right). 
	\end{align*}
\end{lem}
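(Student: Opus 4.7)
The plan is to identify the algorithm's iterate with an inexact projected gradient step on $K$ whose error is controlled by $\eps_1$, and then to absorb that error into the contraction of exact GD using the hypothesis on $r_k^2$.

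I would first observe that because $X_{k+1} - \proj_K(Y_{k+1}) \in K^\perp$, the projection $\Pi_{k+1} := \proj_K(X_{k+1})$ equals $\proj_K(Y_{k+1})$, and by linearity of $\proj_K$ this yields $\Pi_{k+1} = \Pi_k - \gamma G_k$ with $G_k := \proj_K(\nabla F(X_k))$. The idealized search direction that one would use for exact gradient descent on $K$ is $\tilde G_k := \proj_K(\nabla F(\Pi_k))$, which (as observed in Section \ref{section:decentralized_proj_gd}) is the gradient of $F|_K$ at $\Pi_k$ and vanishes at $\Pi^*$. I would bound the deviation $\xi_k := G_k - \tilde G_k$ by $\|\xi_k\| \le L_{\max}\|X_k - \Pi_k\| \le L_{\max}\sqrt{\eps_1}$, using non-expansivity of $\proj_K$, the coordinatewise $L_{\max}$-Lipschitzness of $\nabla F$, and the fact that $\|X_k - \Pi_k\|^2 \le \eps_1$ (which follows from Step \ref{state:compute_proj} for $k\ge 1$ and is trivial for $k=0$ since $X_0\in K$).

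Next I would expand $r_{k+1}^2 = \|\Pi_k - \gamma(\tilde G_k + \xi_k) - \Pi^*\|^2$ into six terms and apply Proposition \ref{prop:nesterov_strongly_convex_smooth} to $F|_K$, which is $\hat\mu_f$-strongly convex and $\hat L_f$-smooth with zero gradient at $\Pi^*$, to lower-bound $\langle \tilde G_k, \Pi_k - \Pi^*\rangle$ by $\tfrac{\hat\mu_f\hat L_f}{\hat\mu_f+\hat L_f}r_k^2 + \tfrac{1}{\hat\mu_f+\hat L_f}\|\tilde G_k\|^2$. The two remaining cross-terms that contain $\xi_k$ would then be controlled by two independent applications of Lemma \ref{lemma:extended_cauchy_schwarz}(\ref{eq:extended_cauchy_schwarz_1}), with Young weights $\alpha$ attached to $\langle \xi_k, \Pi_k-\Pi^*\rangle$ and $\beta$ attached to $\langle \tilde G_k, \xi_k\rangle$.

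The final step is to calibrate the constants. I would take $\gamma = 1/(\hat\mu_f+\hat L_f)$, which makes the coefficient of $\|\tilde G_k\|^2$ coming from the expansion plus the Proposition equal to $-\gamma^2$; then $\beta=1$ makes the Young contribution $\gamma^2\beta\|\tilde G_k\|^2$ exactly cancel it. The remaining weight $\alpha = \hat\mu_f\hat L_f/(\hat\mu_f+\hat L_f)^2$ retains a contraction factor of $1 - \hat\mu_f\hat L_f/(\hat\mu_f+\hat L_f)^2 \le 1 - \hat\mu_f/(4\hat L_f)$ (the inequality coming from $(\hat\mu_f+\hat L_f)^2 \le 4\hat L_f^2$) and inflates the $\|\xi_k\|^2$ coefficient to at most $3/(2\hat\mu_f\hat L_f)$. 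The hypothesis $r_k^2 \ge 12 L_{\max}^2\eps_1/\hat\mu_f^2$ then gives $\tfrac{3}{2\hat\mu_f\hat L_f}L_{\max}^2\eps_1 \le \tfrac{\hat\mu_f}{8\hat L_f}r_k^2$, which bridges the gap between $\hat\mu_f/(4\hat L_f)$ and $\hat\mu_f/(8\hat L_f)$ and delivers the claim. The main obstacle is precisely this constant tracking: the prescribed numbers $12$ and $1/8$ leave essentially no slack, so the Young weights $\alpha,\beta$ and the step size $\gamma$ must be chosen as above rather than by coarse inequalities.
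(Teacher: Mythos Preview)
Your proposal is correct and follows essentially the same route as the paper: both identify $\Pi_{k+1}=\Pi_k-\gamma\proj_K(\nabla F(X_k))$, apply Proposition~\ref{prop:nesterov_strongly_convex_smooth} to $F|_K$, set $\gamma=1/(\hat\mu_f+\hat L_f)$, and arrive at the identical intermediate bound $r_{k+1}^2\le r_k^2\bigl(1-\hat\mu_f/(4\hat L_f)\bigr)+\tfrac{3}{2\hat\mu_f\hat L_f}L_{\max}^2\eps_1$ before absorbing the additive term via the hypothesis. The only cosmetic difference is that you split $G_k=\tilde G_k+\xi_k$ at the outset and use Lemma~\ref{lemma:extended_cauchy_schwarz}\eqref{eq:extended_cauchy_schwarz_1} twice, whereas the paper keeps $\|G_k\|^2$ in the expansion and invokes Lemma~\ref{lemma:extended_cauchy_schwarz}\eqref{eq:extended_cauchy_schwarz_2} once to trade it for $\|\tilde G_k\|^2$; the resulting constants coincide exactly.
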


\begin{proof}
\begin{align*}
	&r_{k+1}^2 = \|\Pi_k - \Pi^* - \gamma\proj_K(\nabla F(X_k))\|^2 \\
	&\quad = r_k^2 + \gamma^2 \|\proj_K(\nabla F(X_k))\|^2 \\
	&\quad\quad - 2\gamma\langle \Pi_k - \Pi^*, \proj_K(\nabla F(X_k))\rangle \\
	&\quad = r_k^2 + \gamma^2 \|\proj_K(\nabla F(X_k))\|^2 \\
	&\quad\quad - 2\gamma\underbrace{\langle \Pi_k - \Pi^*, \proj_K(\nabla F(\Pi_k)) - \proj_K(\nabla F(X^*)) \rangle}_{\circledOne} \\
	&\quad\quad - 2\gamma\underbrace{\langle \Pi_k - \Pi^*, \proj_K(\nabla F(X_k)) - \proj_K(\nabla F(\Pi_k)) \rangle}_{\circledTwo} \numberthis\label{eq:proj_gd_step_bound_1}
\end{align*}

\begin{enumerate}
	\item First, let us estimate $\circledOne$. Note that for all $\Pi\in K$, it holds
	\begin{align*}
		\Pi &= [\pi, \ldots, \pi] \\
		\nabla F(\Pi) &= [\nabla f_1(\pi), \ldots, \nabla f_n(\pi)]
	\end{align*}
	Moreover, for all $X\in\RR^{d\times n}$ it holds
	\begin{align*}
		\proj_K(X) &= \argmin_{Z\in K} \|Z - X\|^2 = [\ol x, \ldots, \ol x],
	\end{align*}
	where $\ol x = \frac{1}{n}\Sum_{i=1}^n x_i$. In particular,
	\begin{align*}
		\proj_K(\nabla F(\Pi)) = [\nabla f(\pi) / n, \ldots, \nabla f(\pi) / n]
	\end{align*}
	
	Now we can estimate $\circledOne$ by Proposition \ref{prop:nesterov_strongly_convex_smooth}. For brevity we introduce $\hat\mu_f = \mu_f/n, \hat L_f = L_f/n$.
	\begin{align*}
		\langle &\Pi_k - \Pi^*, \proj_K(\nabla F(\Pi_k)) - \proj_K(\nabla F(X^*)) \rangle \\
		& = n\cdot \langle \pi_k - \pi^*, \nabla f(\pi) / n - \nabla f(x^*) / n\rangle \\
		& = \langle \pi_k - \pi^*, \nabla f(\pi) - \nabla f(\pi^*) \rangle \\
		&\ge \frac{(n\hat\mu_f)(n\hat L_f)}{n\hat\mu_f + n\hat L_f} \|\pi_k - \pi^*\|^2 \\
		&\qquad + \frac{1}{n\hat\mu_f + n\hat L_f} \|\nabla f(\pi_k) - \nabla f(\pi^*)\|^2 \\
		& = \frac{\hat\mu_f \hat L_f}{\hat\mu_f + \hat L_f} \|\Pi_k - \Pi^*\|^2 + \frac{1}{\hat\mu_f + \hat L_f} \|\proj_K(\nabla F(\Pi_k))\|^2
	\end{align*}
	
	\item Let us employ \eqref{eq:extended_cauchy_schwarz_1} with $p = \frac{\hat\mu_f + \hat L_f}{\hat\mu_f \hat L_f}$ to estimate $\circledTwo$.
	\begin{align*}
		&-2\gamma\langle \Pi_k - \Pi^*, \proj_K(\nabla F(X_k)) - \proj_K(\nabla F(\Pi_k)) \rangle \\
		&\qquad \le \gamma\cdot\frac{\hat\mu_f \hat L_f}{\hat\mu_f + \hat L_f} \|\Pi_k - \Pi^*\|^2 \\
		&\qquad\quad + \gamma\cdot\frac{\hat\mu_f + \hat L_f}{\hat\mu_f \hat L_f} \|\nabla F(X_k) - \nabla F(\Pi_k)\|^2 \\
		&\qquad \le \gamma\frac{\hat\mu_f \hat L_f}{\hat\mu_f + \hat L_f} r_k^2 + \gamma \frac{\hat\mu_f + \hat L_f}{\hat\mu_f \hat L_f} L_{\max}^2 \eps_1
	\end{align*}
\end{enumerate}

Now we return to \eqref{eq:proj_gd_step_bound_1}.
\begin{align*}
	r_{k+1}^2 
	&\le r_k^2 + \underline{\gamma^2\|\proj_K(\nabla F(X_k))\|^2} \\
	& - 2\gamma\frac{\hat\mu_f \hat L_f}{\hat\mu_f + \hat L_f} r_k^2 - \underline{\frac{2\gamma}{\hat\mu_f + \hat L_f} \|\proj_K(\nabla F(\Pi_k))\|^2} \\
	&\qquad + \gamma\frac{\hat\mu_f \hat L_f}{\hat\mu_f + \hat L_f} r_k^2 + \gamma\frac{\hat\mu_f + \hat L_f}{\hat\mu_f \hat L_f} L_{\max}^2 \eps_1 \numberthis \label{eq:proj_gd_step_bound_2}
\end{align*}

The sum of underlined terms may be estimated by setting $\gamma\in\left(0, \frac{2}{\hat\mu_f + \hat L_f}\right]$ and using \eqref{eq:extended_cauchy_schwarz_2} with $p = \gamma\frac{\hat\mu_f + \hat L_f}{2} \in (0, 1)$.
\begin{align*}
	\gamma^2 &\|\proj_K(\nabla F(X_k))\|^2 - \frac{2\gamma}{\hat\mu_f + \hat L_f} \|\proj_K(\nabla F(\Pi_k))\|^2 \\
	& = \frac{2\gamma}{\hat\mu_f + \hat L_f} \Bigg(\frac{\gamma(\hat\mu_f +\hat L_f)}{2} \|\proj_K(\nabla F(X_k))\|^2 \\
	&\qquad\qquad - \|\proj_K(\nabla F(\Pi_k))\|^2 \Bigg) \\
	&\le \frac{2\gamma}{\hat\mu_f + \hat L_f}\cdot \frac{\gamma(\hat\mu_f +\hat L_f)}{2}\cdot \left(1 - \frac{\gamma(\hat\mu_f + \hat L_f)}{2}\right)^{-1}\cdot \\
	&\qquad\|\proj_K(\nabla F(X_k)) - \proj_K(\nabla F(\Pi_k))\|^2 \\
	&\le \frac{2\gamma^2}{2 - \gamma(\hat\mu_f + \hat L_f)}\cdot L_{\max}^2 \eps_1
\end{align*}

Finally, we return to \eqref{eq:proj_gd_step_bound_2}, set $\gamma = \frac{1}{\hat\mu_f + \hat L_f}$ and estimate $r_{k+1}$.
\begin{align*}
	r_{k+1}^2
	&\le r_k^2 \left(1 - \gamma\frac{\hat\mu_f \hat L_f}{\hat\mu_f + \hat L_f}\right) \\
	&\qquad + \left(\gamma\frac{\hat\mu_f + \hat L_f}{\hat\mu_f \hat L_f} + \frac{2\gamma^2}{2 - \gamma(\hat\mu_f + \hat L_f)}\right)\cdot L_{\max}^2 \eps_1 \\
	&= r_k^2 \left(1 - \frac{\hat\mu_f \hat L_f}{(\hat\mu_f + \hat L_f)^2}\right) + \left(\frac{1}{\hat\mu_f \hat L_f} + \frac{2}{(\hat\mu_f + \hat L_f)^2}\right)\cdot L_{\max}^2 \eps_1 \\
	&\le r_k^2 \left(1 - \frac{\hat\mu_f}{4\hat L_f}\right) + \frac{3}{2\hat\mu_f \hat L_f}\cdot L_{\max}^2 \eps_1.
\end{align*}
For $r_k^2\ge \frac{12}{\hat\mu_f^2} L_{\max}^2 \eps_1$ it holds
\begin{align*}
	r_{k+1}^2 \le r_k^2\left(1 - \frac{\hat\mu_f}{8\hat L_f}\right). 
\end{align*}
\end{proof}

\begin{proof}[Proof of Theorem \ref{th:decentralized_gd}]
Due to Lemma \ref{lemma:proj_gd_step}, after $N$ iterations we get 
\begin{align*}
	\|\Pi_N - X^*\| = \|\Pi_N - \Pi^*\|^2 = r_N^2
	\le \frac{12}{\mu_f^2}L_{\max}^2\eps_1.
\end{align*}
Since $\|X_N - \Pi_N\|^2\le\eps_1$,
\begin{align*}
	\|X_N - X^*\|^2 &= \|X_N - \Pi_N\|^2 + \|\Pi_N - X^*\|^2 \\
	&\le \frac{12}{\mu_f^2}L_{\max}^2\eps_1 + \eps_ 1 \le \frac{13}{\mu_f^2}L_{\max}^2\eps_1 = \eps,
\end{align*}
which concludes the proof.
\end{proof}

\section{Proof of Theorem \ref{th:summarize_rate}}

\begin{lem}\label{lem:steps_to_consensus}
	Let $r_0 = \norm{X_0 - \proj_K (X_0)}$ and let Assumption \ref{assum:mixing_matrix} hold. Then
	\begin{enumerate}
		\item At every step $k$ of Algorithm \ref{alg:consensus}, it holds $\proj_K X_k = \proj_K X_0$.
		\item For any $\eps> 0$, after $m \ge B\left(\log(1/\delta)\right)^{-1} \log(r_0/\eps)$ iterations Algorithm \ref{alg:consensus} yields $X_m$ such that $\norm{X_m - \proj_K (X_m)} \le \eps$.
	\end{enumerate}
\end{lem}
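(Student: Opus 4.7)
My plan is to handle the two claims in order, using only the doubly-stochasticity of the $W(k)$ for the first and Proposition \ref{prop:consensus} applied in blocks for the second.

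For part 1, I would start from the closed form $\proj_K(X) = \tfrac{1}{n} X\,\onevec \onevec^\top$, which is immediate from the fact that $\proj_K(X)$ replaces every column of $X$ by the column average $\bar x = \tfrac{1}{n} X \onevec$. Using $W_k \onevec = \onevec$ (since $W(k)$ is doubly stochastic), the single-step computation is
\begin{align*}
\proj_K(X_{k+1}) = \tfrac{1}{n} X_k W_k \onevec \onevec^\top = \tfrac{1}{n} X_k \onevec \onevec^\top = \proj_K(X_k),
\end{align*}
and a one-line induction finishes claim 1.

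For part 2, I would first record an orthogonal decomposition of the update. Writing $U_k = X_k - \proj_K(X_k)$, part 1 together with $\proj_K(X_k)\, W_k = \proj_K(X_k)$ (which follows from $\onevec^\top W_k = \onevec^\top$, since $\proj_K(X_k)$ has the form $c\, \onevec^\top$) gives the clean recursion $U_{k+1} = U_k\, W_k$. Iterating over a block of $B$ consecutive steps and invoking Proposition \ref{prop:consensus} yields $\|U_{k+B}\| \le \delta \|U_k\|$ for all $k \ge 0$. Splitting $m$ iterations into $j = \lfloor m/B \rfloor$ such blocks (and using that a doubly-stochastic $W_k$ is non-expansive on the Frobenius norm to absorb the remaining $m - jB$ steps), one gets $\|U_m\| \le \delta^j r_0$. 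Setting $\delta^j r_0 \le \eps$ and solving for $j$ gives $m \ge B \log(r_0/\eps)/\log(1/\delta)$, as claimed.

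The only real subtlety I anticipate is reconciling the index order: the algorithm produces the right-product $W_0 W_1 \cdots W_{B-1}$, while Proposition \ref{prop:consensus} is stated for $W_B(k) = W(k)\cdots W(k-B+1)$. Since $\delta(k)$ is a spectral-norm quantity and $\tfrac{1}{n}\onevec\onevec^\top$ is symmetric, this only amounts to a transpose/reindexing and does not affect the contraction factor. Once that is noted, everything else is routine algebra.
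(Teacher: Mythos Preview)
Your proposal is correct and follows the same approach as the paper: doubly-stochasticity together with $P=\tfrac{1}{n}\onevec\onevec^\top$ for part~1, then Proposition~\ref{prop:consensus} applied in blocks of $B$ for part~2. You are in fact more careful than the paper's own proof, which simply writes $\delta^{m/B}$ without taking a floor and never mentions the product-order mismatch you flag; your transpose remark is the right idea, though a fully rigorous version would need either symmetric $W(k)$ or Assumption~\ref{assum:mixing_matrix} restated for the reversed product.
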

\begin{proof}
	\item
	\begin{enumerate}
		\item Introduce projection matrix $P = \frac{1}{n} \onevec\onevec^\top$ and note that $XP = \proj_K X$ for every $X\in\RR^{d\times n}$. Due to Assumption \ref{assum:mixing_matrix}, $W(k)$ is doubly stochastic for every $k$ and therefore $PW(k) = P$. This means that $X_k P = X_{k-1} P = \ldots = X_{0} P = \proj_K X_0$.
		\item For $m \ge B\left(\log(1/\delta)\right)^{-1} \log(r_0/\eps)$, it holds
		\begin{align*}
			&\frac{m}{B}\log(1/\delta)\ge \log(r_0/\eps), \\
			&\left(\frac{1}{\delta}\right)^{m/B}\ge \frac{r_0}{\eps}, \\
			&r_0\cdot \delta^{m/B}\le \eps.
		\end{align*}
		By Proposition \ref{prop:consensus} it holds that $\norm{X_m - \proj_K X_m}\le \delta^{m/B}\norm{X_0 - \proj_K X_0}\le\eps$.
		\begin{align*}			
		\end{align*}
	\end{enumerate}
\end{proof}

\begin{proof}[Proof of Theorem \ref{th:summarize_rate}]
It is sufficient to show that on every outer iteration, the consensus algorithm requires $O\left(B\left(\log\left(\frac{1}{\delta}\right)\right)^{-1} \log\left(\frac{\left(\|\nabla F(X^*)\| + L_{\max}\norm{X_0 - X^*}\right) n L_{\max}^2}{\eps\mu_f^2 L_f}\right)\right)$ iterations. In order to do this, let us estimate the distance to consensus after taking a gradient step at iteration $k$.

Denote $X_{k+1/2} = X_k - \gamma\nabla F(X_k)$, where $\gamma = \frac{n}{L_f + \mu_f}$. Then
\begin{align*}
	&\norm{X_{k+1/2} - \proj_K(X_{k+1/2})} = 
	\norm{X_k - \gamma\nabla F(X_k) - \proj_K(X_{k} - \gamma\nabla F(X_k))} \\
	&\qquad\le \norm{X_k - \proj_K(X_k)} + \gamma\norm{\nabla F(X_k) - \proj_K(\nabla F(X_k))} \le \eps_1 + \gamma\norm{\nabla F(X_k)}.
\end{align*}
We estimate $\nabla F(X_k)$ using $L_{\max}$-smoothness of $F$ on $\RR^d$ and Lemma \ref{lemma:proj_gd_step}.
\begin{align*}
	&\norm{\nabla F(X_k)}\le \norm{\nabla F(X^*)} + \norm{\nabla F(X_k) - \nabla F(X^*)} \le \norm{\nabla F(X^*)} + L_{\max}\norm{X_k - X^*} \\
	&\quad\le \norm{\nabla F(X^*)} + L_{\max}(\norm{X_k - \Pi_k} + r_k) = \norm{\nabla F(X^*)} + L_{\max}\eps_1 + L_{\max}r_0.
\end{align*}
Now the distance to consensus at iteration $k$ is estimated as
\begin{align*}
	&\norm{X_{k+1/2} - \proj_K(X_{k+1/2})}\le \eps_1(1 + \gamma L_{\max}) + \gamma\norm{\nabla F(X^*)} + \gamma L_{\max}r_0 \\
	&= \eps_1(1 + \gamma L_{\max}) + (\norm{\nabla F(X^*)} + L_{\max}r_0)\cdot\frac{n}{L_f + \mu_f}.
\end{align*}
By Lemma \ref{lem:steps_to_consensus}, the number of iterations required by the consensus algorithm is
\begin{align*}
	N 
	&= O\left(B\left(\log\left(1/\delta\right)\right)^{-1}\frac{\norm{X_{k+1/2} - \proj_K(X_{k+1/2})}}{\eps_1}\right) \\
	&= O\left(B\left(\log\left(1/\delta\right)\right)^{-1}  \log\left(\frac{\left(\|\nabla F(X^*)\| + L_{\max}\norm{X_0 - X^*}\right) n L_{\max}^2}{\eps\mu_f^2}\right)\right)
\end{align*}
which concludes the proof.
\end{proof}

\end{document}